\def\pref#1{(\ref{#1})}
\def\p@enumii{}
\newtheorem{thm}{Theorem}[section]
\newtheorem{lemma}[thm]{Lemma}
\newtheorem{proposition}[thm]{Proposition}
\newtheorem{corollary}[thm]{Corollary}
\theoremstyle{definition}
\theoremstyle{remark}
\newtheorem{remark}[thm]{Remark}
\numberwithin{equation}{section}
\renewcommand{\b}[1]{\overline{#1}}
\renewcommand{\b}{\big}
\newcommand{\G}{\mathcal{G}}
\newcommand{\inv}{\mathrm{Inv}}
\newcommand{\gr}{\mathrm{gr}}
\begin{document}
\title{Involutory Cayley graphs of polynomial and power series rings over the ring of integers modulo $n$}

\author{Hamide Keshavarzi$^1$, Afshin Amini$^{2}$, Babak Amini$^3$\\ 
\it\small Department of Mathematics, College of Sciences, Shiraz University, \\
\it\small 71457-44776, Shiraz, Iran\\
\it\small $^1$E-mail: hamide2326@hafez.shirazu.ac.ir\\
\it\small $^2$E-mail: aamini@shirazu.ac.ir\\
\it\small $^3$E-mail: bamini@shirazu.ac.ir}
\date{}

\maketitle
\begin{abstract}
Let $R$ be a commutative ring with identity. The involutory Cayley graph $\G(R)$
of $R$ is defined as the graph whose vertex set is the set of elements of $R$, where two
vertices $a$ and $b$ are adjacent exactly when $(a-b)^2=1$. This paper investigates the properties of involutory Cayley graphs associated with polynomial and power series rings over the ring of integers modulo $n$. 
\end{abstract}

Keywords: Cayley graph, Involution, Polynomial, Power series. \\
\indent 2020 Mathematical Subject Classification:  	05C25, 13M10, 05C69

\section{Introduction}

Throughout this paper, all rings are assumed to be commutative and have an
identity. In addition, all graphs are simple
and undirected. $R$ always denotes a ring and  by $\mathbb{Z}_n$, we mean the ring of integers modulo $n$.

A significant and extensively researched class of highly symmetric graphs with strong ties to algebraic structures are Cayley graphs. Their analogues over rings have also attracted more attention recently, despite the fact that they were first defined in the context of arbitrary groups, where they have been thoroughly studied. In contrast to their group-theoretic counterparts, Cayley graphs frequently display rich algebraic and combinatorial properties when built over rings. Specifically, Cayley graphs over rings like $\mathbb{Z}_n$, polynomial rings, and matrix rings have been examined in connection with zero-divisor graphs, unit graphs, and other algebraic graph constructions (see, for example, \cite{akhtar,inv,inv2,qunit1,qunit3,unit1,unit2,unit3,unit4,unit5,unit6,zunit1,zunit2}).

Recall that an element $u$ of a ring is said to be \textit{involutory} or an \textit{involution} when $u=u^{-1}$. In this context, we consider the \textit{involutory Cayley graph} $\G(R)$ of a ring $R$, defined as the Cayley graph of the abelian group $(R,+)$ with respect to the set $\inv(R)$ of involutory elements of $R$. That is, the vertices of $\G(R)$ are the elements of $R$, and two vertices $a$ and $b$ are adjacent (denoted $a\sim b$) if and only if $(a-b)^2=1$. Note that in $\G(R)$, each vertex $a$ is adjacent to $a+u$ for all $u\in \inv(R)$, which implies that $\G(R)$ is a $|\inv(R)|$-regular graph with no isolated vertices. 

In \cite{inv}, we studied the involutory Cayley graphs of finite commutative rings. Let $R$ be a finite commutative ring. It was shown that $\G(R)$ is a complete graph if and only if  $R\simeq \mathbb{Z}_2$ or $\mathbb{Z}_3$. Moreover, $\G(R)$ is always a $2^t$-regular graph for some nonnegative integer $t$.  It was also proven that  $\G(R)$ is a cycle graph if and only if $R$ is isomorphic to one of the following rings:  $\mathbb{Z}_2[x]/\langle x^2 \rangle$, $\mathbb{Z}_4$, $\mathbb{Z}_{p^k}$,  or  $\mathbb{Z}_{2p^k}$,
where $p$ is an odd prime number and $k$ is a positive integer. Furthermore, $\G(R)$ is self-complementary if and only if $R$ is isomorphic to either  $\mathbb{Z}_5$ or $\mathbb{Z}_3\times \mathbb{Z}_3$. 
In addition, $\G(R)$ is bipartite if and only if $|R|$ is even.

Besides these results, the chromatic number, edge chromatic number, clique number, independence number and girth of $\G(R)$ were completely determined.  We also classified all finite commutative rings $R$ for which $\G(R)$ is planar. Extending our investigation, in \cite{inv2}, we further classified all finite commutative rings $R$ whose involutory Cayley graphs are toroidal.

We now recall the notion of the direct product of graphs. For two graphs $G_1$ and $G_2$, the \textit{direct product} (also known as the \textit{conjunction product} or \textit{Kronecker product}), denoted by $G_1\otimes G_2$, is defined as the graph with vertex set  $V(G_1)\times V(G_2)$, where two vertices $(g_1,g_2)$ and $(h_1,h_2)$ are adjacent in $G_1\otimes G_2$ if and only if  $g_i$ and $h_i$ are adjacent in $G_i$ for each $i=1,2$.

Let $R$ be a commutative ring with a decomposition $R\simeq R_1\times\cdots\times R_t$. Then,  the involutory Cayley graph of $R$ decomposes as $$\G(R)\simeq\G(R_1)\otimes \cdots\otimes \G(R_t).$$ To see this, observe  that for elements $x=(x_1,\ldots,x_t)$ and $y=(y_1,\ldots,y_t)$  in $R$, the difference $x-y$ is involutory in $R$ if and only if $x_i-y_i$  is involutory in $R_i$ for each $i$. Moreover, if each $\G(R_i)$ is $k_i$-regular, then $\G(R)$ is $k$-regular with $k=k_1\cdots k_t$.

For the ring $\mathbb{Z}_n$, some essential results from \cite{inv} are summarized in the following lemma.

\begin{lemma}\label{1.1}
For any positive integer $n\geq 2$, the following statements hold.
\begin{enumerate}
\item \cite[Theorem 3.1]{inv}
$\G(\mathbb{Z}_n)$ is bipartite if and only if $n$ is even;

\item \cite[Theorem 6.8]{inv} $\G(\mathbb{Z}_n)$ is planar if and only if $n=2$, $4$, $p^k$ or $2p^k$ for some odd prime $p$ and positive integer $k$;

\item \cite[Theorem 4.1]{inv} The clique number of $\G(\mathbb{Z}_n)$ is $3$ if $n=3$, otherwise it is $2$;

\item \cite[Corollary 3.2]{inv} The chromatic number of $\G(\mathbb{Z}_n)$ is $2$ if $n$ is even, otherwise it is $3$;

\item \cite[Theorem 5.4]{inv} For the girth of $\G(\mathbb{Z}_n)$, we have
\[   
\gr\b(\G(\mathbb{Z}_n)\b) = 
\begin{cases}
	\infty &\qquad n=2,\\
	n & \qquad n=p^k \, \text{or}\, \,  2p^k\, \text{for some odd prime} \, \, p,\\ 
	4 & \qquad  \text{otherwise}. \\ 
\end{cases}
\]
\end{enumerate}
\end{lemma}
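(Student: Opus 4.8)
The plan is to deduce all five statements from two ingredients already recalled in the introduction: the structure of the involution set $\inv(\mathbb{Z}_n)=\{u\in\mathbb{Z}_n:u^2=1\}$, and the decomposition $\G(\mathbb{Z}_n)\cong\bigotimes_i\G(\mathbb{Z}_{p_i^{e_i}})$ coming from a factorisation $n=2^{e_0}p_1^{e_1}\cdots p_r^{e_r}$ with the $p_i$ odd primes. Since $\inv(\mathbb{Z}_{p^k})=\{\pm1\}$, $\inv(\mathbb{Z}_2)=\{1\}$, $\inv(\mathbb{Z}_4)=\{1,3\}$, and $|\inv(\mathbb{Z}_{2^k})|=4$ for $k\ge3$, the Chinese Remainder Theorem gives $|\inv(\mathbb{Z}_n)|=2^{t}$ for an explicit exponent $t=t(n)$; in particular $|\inv(\mathbb{Z}_n)|=2$ exactly when $n\in\{4\}\cup\{p^k,2p^k:p\text{ odd},\,k\ge1\}$. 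I will also use that $\G(\mathbb{Z}_n)$ is connected, since the involution $1$ gives the spanning closed walk $0\sim1\sim2\sim\cdots\sim n-1\sim0$, and that $\G(\mathbb{Z}_n)$ is $2^{t}$-regular, hence has exactly $2^{t-1}n$ edges.

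For (i): if $n$ is even then every $u$ with $u^2\equiv1\pmod n$ is odd (for even $u$, $u^2-1$ is odd and cannot be a multiple of the even number $n$), so every edge joins an even residue to an odd one and $\G(\mathbb{Z}_n)$ is bipartite; if $n$ is odd the spanning walk above is an odd cycle, so $\G(\mathbb{Z}_n)$ is not bipartite. For (iii): I would translate a putative triangle so that one vertex is $0$, obtaining $a,c\in\inv(\mathbb{Z}_n)$ with $(a-c)^2=1$; expanding and using $a^2=c^2=1$ yields $2ac=1$, so $2$ is a unit and $(ac)^2=2^{-2}$, while also $(ac)^2=1$, forcing $4\equiv1\pmod n$, i.e.\ $n=3$. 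Since $\G(\mathbb{Z}_3)=K_3$ and every other $\G(\mathbb{Z}_n)$ contains an edge but no triangle, the clique number is as stated.

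For (iv): when $n$ is even, (i) and the presence of an edge give $\chi=2$; when $n$ is odd, each $\G(\mathbb{Z}_{p_i^{e_i}})$ is a connected $2$-regular graph, hence an odd cycle $C_{p_i^{e_i}}$, so projecting to the $i$-th coordinate is a graph homomorphism $\G(\mathbb{Z}_n)\to C_{p_i^{e_i}}$ and $\chi(\G(\mathbb{Z}_n))\le\min_i\chi(C_{p_i^{e_i}})=3$, while non-bipartiteness forces $\chi\ge3$. For (v): $\G(\mathbb{Z}_2)=K_2$ is acyclic, so its girth is $\infty$; if $n=p^k$ or $2p^k$ then $\G(\mathbb{Z}_n)$ is a connected $2$-regular graph on $n$ vertices, hence $C_n$, of girth $n$; otherwise either $n=4$, where $\G(\mathbb{Z}_4)=C_4$, or $|\inv(\mathbb{Z}_n)|\ge4$, in which case choosing $u\in\inv(\mathbb{Z}_n)\setminus\{1,-1\}$ gives the $4$-cycle $0\sim1\sim1+u\sim u\sim0$ on four distinct vertices, so together with (iii) the girth is $4$.

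The only part that requires real work is (ii). If $t\ge3$ then $\G(\mathbb{Z}_n)$ is at least $8$-regular, so $|E|\ge4n>3n-6$ and it is non-planar; if $t=2$ and $n$ is even then $\G(\mathbb{Z}_n)$ is $4$-regular and, by (i), bipartite, so $|E|=2n>2n-4$ and again it is non-planar; if $t\le1$ then $n=2$ (the graph $K_2$) or $\G(\mathbb{Z}_n)$ is a cycle, both planar. This leaves $t=2$ with $n$ odd, where $n=p^aq^b$ and $\G(\mathbb{Z}_n)\cong C_{p^a}\otimes C_{q^b}$ with both cycle lengths odd and at least $3$; here I must show that the direct product of two odd cycles of length $\ge3$ is never planar. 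Euler's formula gives no contradiction in this case, so the argument must be topological: the plan is to exhibit a subdivision of $K_{3,3}$, using that the diagonal steps $(i,j)\mapsto(i+1,j+1)$ and $(i,j)\mapsto(i+1,j-1)$ are edges and threading diagonal arcs between a small set of branch vertices, the target being the non-planar graph $C_3\otimes C_3$ realised as a topological minor. (Alternatively one may invoke the known classification of planar circulant graphs, since every $\G(\mathbb{Z}_n)$ is circulant.) A final bookkeeping check confirms that the cases already ruled non-planar are exactly those with $n\notin\{2,4,p^k,2p^k\}$. The main obstacle is precisely this uniform $K_{3,3}$-subdivision: unlike every other case, non-planarity here is not forced by an edge count, so it demands a genuine forbidden-minor argument valid for all admissible $p^a$ and $q^b$.
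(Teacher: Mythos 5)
Your parts (i), (iii), (iv) and (v) are sound, but note that the paper does not prove Lemma \ref{1.1} at all: it is simply quoted from \cite{inv}, so you are supplying a proof where the authors only cite one. Measured as a self-contained proof, your argument has one genuine gap, in part (ii): the case $n=p^aq^b$ with $p,q$ distinct odd primes (your ``$t=2$, $n$ odd'' case) is not actually proved. You reduce it to showing that $C_{p^a}\otimes C_{q^b}$ is non-planar, assert that ``Euler's formula gives no contradiction'', and then only sketch a hoped-for $K_{3,3}$-subdivision without exhibiting it; as written, the classification in (ii) is therefore incomplete.

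Moreover, the claim that edge-counting cannot close this case is mistaken, and fixing it makes the missing subdivision unnecessary. The bound $|E|\le 2|V|-4$ for planar graphs does not require bipartiteness, only the absence of triangles (girth at least $4$): each face of a planar embedding is bounded by at least $4$ edges, so $4F\le 2|E|$, and Euler's formula gives $|E|\le 2|V|-4$. In your remaining case $n=p^aq^b\ne 3$, part (iii) already gives $\omega(\G(\mathbb{Z}_n))=2$, so the graph is triangle-free, while $|\inv(\mathbb{Z}_n)|=4$ makes it $4$-regular with $|E|=2n>2n-4$; hence it is non-planar, exactly as in your even case. (Incidentally, your stated target $C_3\otimes C_3$ is also not the right model graph: $\G(\mathbb{Z}_3)\otimes\G(\mathbb{Z}_3)$ corresponds to $\mathbb{Z}_3\times\mathbb{Z}_3$, which is not $\mathbb{Z}_n$ for any $n$; the smallest relevant instance is $\G(\mathbb{Z}_{15})\simeq C_3\otimes C_5$.) With that one repair, your proof of the lemma is complete and is, of course, a different route from the paper, which relies entirely on the citations to \cite{inv}.
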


This paper is organized as follows. In Section 2, we study the involutory elements of the polynomial and power series rings $\mathbb{Z}_n[x]$ and $\mathbb{Z}_n[[x]]$. As applications, in Section 3, we show that $\G(\mathbb{Z}_n[x])$ is always disconnected and determine exactly when this graph is bipartite or planar.  We also explore important graph parameters such as its independence number, clique number, chromatic number, and girth. Lastly, we point out that all these results carry over to $\G(\mathbb{Z}_n[[x]])$ as well.


\section{Involutory elements of $\mathbb{Z}_n[x]$ and $\mathbb{Z}_n[[x]]$}

In this section, we study the involutory elements of $\mathbb{Z}_n[x]$ and $\mathbb{Z}_n[[x]]$. We begin by exploring the involutions in $\mathbb{Z}_n$.

\begin{lemma}\label{invloc}
The following statements hold.
\begin{enumerate}
\item \label{invloc1}
$\inv(\mathbb{Z}_2)=\{1\}$ and $\inv(\mathbb{Z}_4)=\{\pm 1\}$;
\item \label{invloc2} $\inv(\mathbb{Z}_{2^k})=\{1,2^{k-1}-1,2^{k-1}+1,2^k-1\}$ for $k\geq 3$;
\item \label{invloc3}
$\inv(\mathbb{Z}_{p^k})=\{\pm 1\}$ for odd prime number $p$ and positive integer $k$.
\end{enumerate}
\end{lemma}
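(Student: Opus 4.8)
The plan is to reduce everything to counting solutions of $u^2 \equiv 1$ modulo a prime power, using the factorization $u^2 - 1 = (u-1)(u+1)$ together with the observation that an involution is necessarily a unit.

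Part \pref{invloc1} is handled by direct inspection: in $\mathbb{Z}_2$ the only unit is $1$, and in $\mathbb{Z}_4$ one checks $1^2 = 3^2 = 1$ while $0$ and $2$ are not units, so $\inv(\mathbb{Z}_4) = \{\pm 1\}$. For part \pref{invloc3}, suppose $u \in \inv(\mathbb{Z}_{p^k})$, so that $p^k \mid (u-1)(u+1)$. Since $p$ is odd and $(u+1) - (u-1) = 2$ is coprime to $p$, the prime $p$ divides at most one of $u - 1$ and $u + 1$; hence $p^k$ divides one of them, forcing $u \equiv \pm 1 \pmod{p^k}$. Conversely $\pm 1$ are obviously involutions, and they are distinct because $p^k \geq 3$, so $\inv(\mathbb{Z}_{p^k}) = \{\pm 1\}$.

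For part \pref{invloc2}, fix $k \geq 3$ and let $u \in \inv(\mathbb{Z}_{2^k})$. Then $u$ is odd, so $u - 1$ and $u + 1$ are consecutive even integers; exactly one of them is divisible by $4$, whence $\gcd(u-1, u+1) = 2$. From $2^k \mid (u-1)(u+1)$ it follows that $2^{k-1}$ divides one of the two factors, i.e. $u \equiv \pm 1 \pmod{2^{k-1}}$. Lifting to modulus $2^k$ leaves exactly the residues $1,\ 2^{k-1}-1,\ 2^{k-1}+1,\ 2^k - 1$, which are pairwise distinct for $k \geq 3$. Finally one verifies each is an involution: $(2^k - 1)^2 \equiv 1$, and $(2^{k-1} \pm 1)^2 = 2^{2k-2} \pm 2^k + 1 \equiv 1 \pmod{2^k}$, since $2k - 2 \geq k$ for $k \geq 3$.

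All of the arithmetic above is routine; the one genuinely delicate point is the case $p = 2$, where $u - 1$ and $u + 1$ are \emph{both} even, so the clean argument used for odd $p$ (that $p^k$ divides a single factor) breaks down and must be replaced by the sharper statement $\gcd(u-1, u+1) = 2$. This is precisely what drops the modulus from $2^k$ to $2^{k-1}$ and accounts for there being four involutions rather than two once $k \geq 3$ (for $k = 2$ the four residues collapse to $\{\pm 1\}$, recovering part \pref{invloc1}).
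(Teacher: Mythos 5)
Your proof is correct and follows essentially the same route as the paper: factor $u^2-1=(u-1)(u+1)$ and exploit the fact that of the two consecutive even numbers $u\pm 1$ only one can carry more than a single factor of $2$ (the paper phrases this as one factor being $4t$ and the other $4t+2$, you phrase it as $\gcd(u-1,u+1)=2$), forcing $2^{k-1}$ to divide one factor. Your explicit verification that the four candidate residues really are involutions is a small completeness bonus the paper leaves implicit, but it does not change the argument.
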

\begin{proof}
\pref{invloc1} is clear. 

\pref{invloc2} Assume that $u\in \inv(\mathbb{Z}_{2^k})$. This means that $2^k\mid (u-1)(u+1)$, implying that one of $u\pm 1$ must be of the form $4t$ and the other $4t+2$ for some integer $t$. This implies that $2^{k-3}\mid t$, and hence, $4t$ is either equal to $2^{k-1}$ or $2^{k}$ modulo $2^k$. From this, the result follows.

\pref{invloc3} Suppose that $u\in \inv(\mathbb{Z}_{p^k})$. Then $p^k \mid (u-1)(u+1)$, which implies that $p$ divides exactly one of the $u\pm 1$ since $p$ is an odd prime. Consequently, $p^k$ divides exactly one of the $u\pm 1$. From this, the result follows.
\end{proof}

Utilizing the above lemma, we can determine the number of involutions in $\mathbb{Z}_n$.

\begin{lemma}
Suppose that 
$n=2^{r_0}\times p_1^{r_1}\times \cdots \times p_t^{r_t}$, where $r_0$ is a nonnegative integer, $r_i$ are positive integers  for $i\geq 1$ and $p_i$ are distinct odd prime numbers. Then
$$|\inv(\mathbb{Z}_n)|=
\begin{cases}
	2^t &\qquad \quad r_0 = 0 \,\, \text{or}\, \,1 ,\\
	2^{t+1} & \qquad \quad r_0=2 , \\
	2^{t+2} & \qquad \quad r_0\geq 3 . 
\end{cases}$$
\end{lemma}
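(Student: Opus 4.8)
The plan is to use the Chinese Remainder Theorem together with the multiplicativity of the involutory Cayley graph decomposition already recorded in the excerpt. Since $n = 2^{r_0} p_1^{r_1}\cdots p_t^{r_t}$ with the $p_i$ distinct odd primes, we have $\mathbb{Z}_n \simeq \mathbb{Z}_{2^{r_0}}\times \mathbb{Z}_{p_1^{r_1}}\times\cdots\times\mathbb{Z}_{p_t^{r_t}}$ as rings (when $r_0=0$ the first factor is simply absent). An element of a finite product of rings is involutory precisely when each of its coordinates is involutory, so $\inv(\mathbb{Z}_n)$ is in bijection with $\inv(\mathbb{Z}_{2^{r_0}})\times\prod_{i=1}^t \inv(\mathbb{Z}_{p_i^{r_i}})$, giving
\[
|\inv(\mathbb{Z}_n)| = |\inv(\mathbb{Z}_{2^{r_0}})|\cdot\prod_{i=1}^t |\inv(\mathbb{Z}_{p_i^{r_i}})|.
\]

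Next I would plug in the counts supplied by Lemma \ref{invloc}. By \pref{invloc3}, $|\inv(\mathbb{Z}_{p_i^{r_i}})| = 2$ for every odd prime power, so the product $\prod_{i=1}^t|\inv(\mathbb{Z}_{p_i^{r_i}})|$ contributes exactly $2^t$. For the $2$-part, I split into cases according to $r_0$: if $r_0 = 0$, the factor is absent (contributes $1$); if $r_0 = 1$, then $\mathbb{Z}_{2}$ has $|\inv(\mathbb{Z}_2)| = 1$ by \pref{invloc1}; if $r_0 = 2$, then $|\inv(\mathbb{Z}_4)| = 2$ by \pref{invloc1}; and if $r_0\geq 3$, then $|\inv(\mathbb{Z}_{2^{r_0}})| = 4$ by \pref{invloc2}. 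Multiplying the $2$-part by $2^t$ yields $2^t$ when $r_0\in\{0,1\}$, $2^{t+1}$ when $r_0=2$, and $2^{t+2}$ when $r_0\geq 3$, which is exactly the claimed formula.

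There is essentially no obstacle here: the statement is a bookkeeping consequence of the preceding lemma and CRT, and the only point that needs a word of care is the degenerate case $r_0 = 0$, where one must remember that there is simply no factor of the form $\mathbb{Z}_{2^{r_0}}$ contributing to the product (rather than treating $\mathbb{Z}_{2^0}=\mathbb{Z}_1$ as the zero ring, whose only element $0$ is trivially its own inverse and would contribute a factor of $1$ anyway, so either convention gives the same answer). I would state the CRT decomposition, invoke the coordinatewise characterization of involutions in a product ring (which is also the observation underlying the graph decomposition $\G(R)\simeq\G(R_1)\otimes\cdots\otimes\G(R_t)$ mentioned earlier), cite Lemma \ref{invloc} for each factor, and collect the four cases. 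That completes the proof.
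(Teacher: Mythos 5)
Your proposal is correct and follows essentially the same route as the paper: decompose $\mathbb{Z}_n$ via the Chinese Remainder Theorem, use that involutions in a product ring are exactly the tuples of involutions (so $\inv(R\times S)=\inv(R)\times\inv(S)$), and then count each factor using Lemma \ref{invloc}. Your explicit case analysis on $r_0$ and the remark about the degenerate case $r_0=0$ are just a more detailed write-up of the paper's one-line argument.
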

\begin{proof}
Note that for any two rings $R$ and $S$, we have
$$\inv(R\times S)= \inv(R)\times \inv(S).$$
Combining this with Lemma \ref{invloc}, the result follows.
\end{proof}

We now turn our attention to the involutions of polynomial rings.

\begin{proposition}\label{zp^k}
For any odd prime number $p$ and positive integer $k$, we have
$$\inv(\mathbb{Z}_{p^k}[x])=\{\pm 1\}.$$
\end{proposition}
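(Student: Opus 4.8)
The plan is to show that any involutory polynomial $f \in \mathbb{Z}_{p^k}[x]$ must be a constant, and then invoke Lemma \ref{invloc}\pref{invloc3} to conclude that the constant lies in $\{\pm 1\}$. So suppose $f^2 = 1$ in $\mathbb{Z}_{p^k}[x]$. Since $f^2 = 1$ means $f$ is a unit of $\mathbb{Z}_{p^k}[x]$, and the units of $\mathbb{Z}_{p^k}[x]$ are well understood (a polynomial is a unit iff its constant term is a unit of $\mathbb{Z}_{p^k}$ and all higher coefficients are nilpotent, i.e.\ divisible by $p$), we already know $f = f_0 + f_1 x + \cdots + f_d x^d$ with $f_0 \in \U(\mathbb{Z}_{p^k})$ and $p \mid f_i$ for $i \geq 1$. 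The task is to upgrade this to $f_1 = \cdots = f_d = 0$.

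The cleanest route I would take is to reduce modulo successive powers of $p$. Work in the filtration $\mathbb{Z}_{p^k} \supset p\mathbb{Z}_{p^k} \supset \cdots \supset p^k\mathbb{Z}_{p^k} = 0$. Write $g = f - f_0$, so $g \in p\mathbb{Z}_{p^k}[x]$ and $f = f_0 + g$ with $f_0^2 = 1$ (looking at the constant coefficient of $f^2 = 1$) hence $f_0 = \pm 1$. Then $1 = f^2 = f_0^2 + 2 f_0 g + g^2 = 1 + 2 f_0 g + g^2$, so $g(2 f_0 + g) = 0$ in $\mathbb{Z}_{p^k}[x]$. Now $2 f_0 + g$ has constant term $2 f_0 = \pm 2$, which is a unit in $\mathbb{Z}_{p^k}$ since $p$ is odd, and its higher coefficients lie in $p\mathbb{Z}_{p^k}$; hence $2 f_0 + g$ is a unit of $\mathbb{Z}_{p^k}[x]$. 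Multiplying $g(2f_0 + g) = 0$ by the inverse of $2 f_0 + g$ gives $g = 0$, i.e.\ $f = f_0 = \pm 1$. This is short and avoids any induction.

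Alternatively, if one wants to avoid quoting the description of units of $\mathbb{Z}_{p^k}[x]$, one can argue directly: from $f^2 = 1$ reduce mod $p$ to get $\bar f^2 = 1$ in the domain $\mathbb{Z}_p[x]$, so $\bar f = \pm 1$ and every non-constant coefficient of $f$ is divisible by $p$; thus $g := f \mp 1 \in p\mathbb{Z}_{p^k}[x]$, and then the same identity $g(2f_0 + g) = 0$ with $f_0 = \pm 1$ finishes the argument once one notes $2f_0 + g$ is a unit (its reduction mod $p$ is $\pm 2 \neq 0$ in $\mathbb{Z}_p[x]$, a unit there, and a polynomial over $\mathbb{Z}_{p^k}$ whose reduction mod $p$ is a unit is itself a unit — the standard nilpotent-lifting fact). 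I expect the only real point requiring care is justifying that $2 f_0 + g$ is invertible in $\mathbb{Z}_{p^k}[x]$, which is exactly where the hypothesis that $p$ is odd enters (for $p = 2$ the element $\pm 2$ is not a unit and indeed the conclusion fails, matching Lemma \ref{invloc}\pref{invloc2}); everything else is a routine coefficient computation.
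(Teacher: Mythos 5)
Your argument is correct, but it takes a genuinely different route from the paper's. The paper proceeds by a direct induction on the coefficients of $f=\sum a_i x^i$: comparing the coefficient of $x^i$ in $f^2=1$ and using the induction hypothesis $a_1=\cdots=a_{i-1}=0$ yields $2a_0a_i=0$, and since $2$ is a unit in $\mathbb{Z}_{p^k}$ (with $a_0=\pm1$) this forces $a_i=0$; no structural facts about $\mathbb{Z}_{p^k}[x]$ are invoked. You instead write $f=f_0+g$ with $f_0^2=1$, factor the relation $f^2=1$ as $g(2f_0+g)=0$, and kill $g$ by showing $2f_0+g$ is a unit of $\mathbb{Z}_{p^k}[x]$, for which you quote either the characterization of $\U(R[x])$ (unit constant term, nilpotent higher coefficients) or reduction modulo $p$ plus lifting of units modulo the nilradical $p\mathbb{Z}_{p^k}[x]$. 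Both of your justifications hold: from $\overline{f}^{\,2}=1$ in the domain $\mathbb{Z}_p[x]$ one gets $\overline{g}=0$, so $2f_0+g$ reduces to $\pm2$, a unit mod $p$, hence a unit; and you correctly locate the role of $p$ being odd in the invertibility of $2$, exactly where the paper uses it. What your approach buys is brevity and generality --- it shows verbatim that $\inv(R[x])=\inv(R)$ for any commutative ring $R$ in which $2$ is invertible --- at the cost of importing the description of units of polynomial rings (or the nilpotent-lifting fact); the paper's induction is slightly longer but entirely self-contained, using nothing beyond comparing coefficients.
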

\begin{proof}
Taking Lemma \ref{invloc} into account, we know that 
$\inv(\mathbb{Z}_{p^k})=\{\pm 1\}$. Suppose that 
$f=\sum_{i=0}^{t}a_ix^i\in \inv(\mathbb{Z}_{p^k}[x])$. Since $f^2=1$, we have $a_0^2=1$, which implies $a_0\in \{\pm 1\}$.

We now prove by induction on $i$ that $a_i=0$ for all $i\geq 1$. By considering the coefficient of $x$ on both sides of the equation $f^2=1$, we obtain $2a_0a_1=0$. Since $2$ is a unit in $\mathbb{Z}_{p^k}$ (as $p$ is an odd prime) and $a_0\in \{\pm 1\}$, it follows that $a_1=0$. Now, assume as the induction hypothesis that $a_1=a_2=\cdots = a_{i-1}=0$. Looking at the coefficient of  $x^i$ in the expansion of $f^2=1$, we obtain
$$a_0a_i+a_1a_{i-1}+\cdots+a_{i-1}a_1+a_ia_0=0.$$
By the induction hypothesis, all terms except $a_0a_i$ and $a_ia_0$ vanish, so we get $2a_0a_i=0$. Again, since $2$ is a unit and $a_0\in \{\pm 1\}$,  we conclude that $a_i=0$, completing the induction. Hence, the result follows.
\end{proof}

\begin{proposition}\label{z2^k}
For any integer $k\geq 2$, 
$$\inv(\mathbb{Z}_{2^k}[x])=\left\{a_0+2^{k-1}f\mid a_0\in \inv(\mathbb{Z}_{2^k}) \,\,\text{and} \,\, f\in \mathbb{Z}_{2^k}[x]\right\}.$$
In addition, we have
$\inv(\mathbb{Z}_{2}[x])=\{1\}.$
\end{proposition}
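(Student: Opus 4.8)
The plan is to prove the two inclusions separately, and then handle the $k=1$ case by hand (where $\inv(\mathbb{Z}_2)=\{1\}$ and the formula $2^{k-1}=1$ would be vacuous, so we state it independently). For the easy inclusion, suppose $g = a_0 + 2^{k-1}f$ with $a_0 \in \inv(\mathbb{Z}_{2^k})$ and $f \in \mathbb{Z}_{2^k}[x]$. Then $g^2 = a_0^2 + 2^k a_0 f + 2^{2k-2} f^2$. Since $k \geq 2$ we have $2k-2 \geq k$, so both the second and third terms vanish modulo $2^k$, leaving $g^2 = a_0^2 = 1$. Hence every such $g$ is an involution.

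For the reverse inclusion, the natural approach is reduction modulo $2^{k-1}$. Let $h = \sum_{i=0}^t a_i x^i \in \inv(\mathbb{Z}_{2^k}[x])$. First, the constant term: comparing constant coefficients in $h^2 = 1$ gives $a_0^2 = 1$ in $\mathbb{Z}_{2^k}$, so $a_0 \in \inv(\mathbb{Z}_{2^k})$. Now I want to show $a_i \equiv 0 \pmod{2^{k-1}}$ for every $i \geq 1$, i.e. that the image $\bar h$ of $h$ in $\mathbb{Z}_{2^{k-1}}[x]$ is the constant $\bar a_0$. Equivalently, $\bar h$ is an involution of $\mathbb{Z}_{2^{k-1}}[x]$ whose reduction I can control inductively: the reduction map $\mathbb{Z}_{2^k}[x] \to \mathbb{Z}_{2^{k-1}}[x]$ sends involutions to involutions, so $\bar h \in \inv(\mathbb{Z}_{2^{k-1}}[x])$, and I can induct on $k$ with base case $k=1$ (where $\inv(\mathbb{Z}_2[x]) = \{1\}$, to be proven directly). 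The induction hypothesis then says $\bar h = \bar a_0 + 2^{k-2}\bar f$ in $\mathbb{Z}_{2^{k-1}}[x]$ for some $\bar f$; but wait — this gives that the non-constant coefficients of $h$ are divisible by $2^{k-2}$, not $2^{k-1}$, so a single reduction step is not quite enough and I need to be more careful.

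The cleaner route is a direct induction on the coefficient index $i$, mirroring the proof of Proposition \ref{zp^k} but keeping track of $2$-adic valuations. Write $a_0 \in \inv(\mathbb{Z}_{2^k})$ so that $a_0$ is a unit (indeed $a_0^{-1} = a_0$). Suppose inductively that $2^{k-1} \mid a_1, \dots, 2^{k-1} \mid a_{i-1}$. Looking at the coefficient of $x^i$ in $h^2 = 1$ gives
\[
2a_0 a_i + \sum_{j=1}^{i-1} a_j a_{i-j} = 0 \quad \text{in } \mathbb{Z}_{2^k}.
\]
Each term $a_j a_{i-j}$ in the sum has $j, i-j \in \{1,\dots,i-1\}$, hence by hypothesis is divisible by $2^{2(k-1)}$, which is $0$ in $\mathbb{Z}_{2^k}$ since $2(k-1) \geq k$. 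So $2 a_0 a_i = 0$ in $\mathbb{Z}_{2^k}$, i.e. $2^{k-1} \mid a_0 a_i$; as $a_0$ is a unit, $2^{k-1} \mid a_i$, closing the induction. Thus $h = a_0 + 2^{k-1} g$ for some $g \in \mathbb{Z}_{2^k}[x]$, as required.

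The main obstacle is recognizing that the crude reduction-mod-$2^{k-1}$ argument loses a factor of $2$ and does not directly give the sharp statement; the fix is to argue at the level of individual coefficients so that the cross terms $a_j a_{i-j}$ are annihilated by the far stronger divisibility $2^{2(k-1)} \mid a_j a_{i-j}$ rather than just $2^{k-2}$-type bounds. Finally, for $\inv(\mathbb{Z}_2[x])$: if $h^2 = 1$ in $\mathbb{Z}_2[x]$, then since $\mathbb{Z}_2[x]$ is an integral domain (hence reduced) and $h$ is a unit, $h$ must be a unit of the domain, so $h \in \mathbb{Z}_2^\times = \{1\}$; alternatively, $(h-1)(h+1) = h^2 - 1 = 0$ and $h+1 = h-1$ in characteristic $2$, so $(h-1)^2 = 0$, forcing $h = 1$ as $\mathbb{Z}_2[x]$ has no nonzero nilpotents.
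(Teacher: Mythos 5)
Your argument is correct and, after you discard the reduction-mod-$2^{k-1}$ false start, it is essentially the paper's proof: induction on the coefficient index, where the cross terms $a_ja_{i-j}$ are killed because $2^{2(k-1)}\equiv 0 \pmod{2^k}$, leaving $2a_0a_i=0$ and hence $2^{k-1}\mid a_i$ since $a_0$ is a unit. The pieces you add explicitly (the easy containment and the $\inv(\mathbb{Z}_2[x])=\{1\}$ case) are precisely what the paper dismisses as straightforward, so there is no substantive difference.
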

\begin{proof}
It is straightforward to verify that  $\inv(\mathbb{Z}_{2}[x])=\{1\}$.

Now, let $f= \sum_{i=0}^{t}a_ix^i\in \inv(\mathbb{Z}_{2^k}[x])$, where $k\geq 2$. Since $f^2=1$, it follows that $a_0\in \inv(\mathbb{Z}_{2^k})$. We prove by induction on $i$ that $a_i\in \{0,2^{k-1}\}$ for all $i\geq 1$. Considering the coefficient of $x$ in $f^2=1$, we obtain $2a_0a_1=0$. As $a_0$ is a unit, it follows that $2a_1=0$, and hence $a_1\in \{0,2^{k-1}\}$.

Now, assume as the induction hypothesis that $a_1,a_2,\cdots,a_{i-1}\in \{0,2^{k-1}\}$. Comparing the coefficient of $x^i$ on both sides of the equation $f^2=1$, we have
$$a_0a_i+a_1a_{i-1}+\cdots+a_{i-1}a_1+a_ia_0=0.$$
By the induction hypothesis, we deduce that $2a_0a_i=0$, and thus $2a_i=0$. Hence $a_i\in \{0,2^{k-1}\}$, as desired.
\end{proof}

The following theorem determines the number of involutions in polynomial rings over $\mathbb{Z}_n$. Recall that a graph is said to be \textit{connected} if there exists a path between any two of its vertices; otherwise, it is said to be \textit{disconnected}. A maximal connected subgraph of a graph is called a \textit{connected component}.

\begin{thm}\label{main}
Suppose that 
$n=2^{r_0}\times p_1^{r_1}\times \cdots \times p_t^{r_t}$, where $r_0$ is a nonnegative integer, $r_i$ are positive integers  for $i\geq 1$ and $p_i$ are distinct odd prime numbers. The following statements hold.
\begin{enumerate}
\item 
If $r_0\in \{0,1\}$, then $|\inv(\mathbb{Z}_n[x])|=2^t$. Moreover, the graph $\G(\mathbb{Z}_n[x])$ is an infinite disjoint union of subgraphs isomorphic to $\G(\mathbb{Z}_n)$;

\item 
If $r_0\geq 2$, then $\inv(\mathbb{Z}_n[x])$ is infinite.
\end{enumerate}
\end{thm}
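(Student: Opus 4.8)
The plan is to push everything through the Chinese Remainder decomposition
$$\mathbb{Z}_n[x]\simeq \mathbb{Z}_{2^{r_0}}[x]\times \mathbb{Z}_{p_1^{r_1}}[x]\times\cdots\times\mathbb{Z}_{p_t^{r_t}}[x]$$
(omitting the first factor when $r_0=0$), together with the identity $\inv(R\times S)=\inv(R)\times\inv(S)$ recalled above and Propositions \ref{zp^k} and \ref{z2^k}.

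For part (1), assume $r_0\in\{0,1\}$. Each odd factor contributes $\inv(\mathbb{Z}_{p_i^{r_i}}[x])=\{\pm1\}$ by Proposition \ref{zp^k}, and the factor $\mathbb{Z}_{2^{r_0}}[x]$, when present, contributes $\inv(\mathbb{Z}_2[x])=\{1\}$ by Proposition \ref{z2^k}; multiplying the sizes gives $|\inv(\mathbb{Z}_n[x])|=2^t$. The key further observation is that every involution of every factor is a \emph{constant} polynomial, so under the decomposition $\inv(\mathbb{Z}_n[x])$ is carried into the subring $\mathbb{Z}_n\subseteq\mathbb{Z}_n[x]$ of constant polynomials, and in fact $\inv(\mathbb{Z}_n[x])=\inv(\mathbb{Z}_n)$ (comparing with the parallel decomposition of $\mathbb{Z}_n$ and applying $\inv(R\times S)=\inv(R)\times\inv(S)$ once more). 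For the second assertion of part (1) I would then partition the additive group as $\mathbb{Z}_n[x]=\bigsqcup_{h\in x\mathbb{Z}_n[x]}(h+\mathbb{Z}_n)$, one coset for each polynomial $h$ with zero constant term. Since $f\sim g$ in $\G(\mathbb{Z}_n[x])$ amounts to $f-g\in\inv(\mathbb{Z}_n[x])=\inv(\mathbb{Z}_n)\subseteq\mathbb{Z}_n$, every edge joins two vertices of the same coset; conversely, on a fixed coset $h+\mathbb{Z}_n$ the translation $a\mapsto h+a$ is a graph isomorphism from $\G(\mathbb{Z}_n)$ onto the induced subgraph, since $(a-b)^2=1$ holds in $\mathbb{Z}_n$ precisely when it holds in $\mathbb{Z}_n[x]$. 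As $x\mathbb{Z}_n[x]$ is countably infinite, this exhibits $\G(\mathbb{Z}_n[x])$ as an infinite disjoint union of copies of $\G(\mathbb{Z}_n)$, these copies being exactly its connected components.

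For part (2), assume $r_0\geq2$. Then the factor $\mathbb{Z}_{2^{r_0}}[x]$ already contains, by Proposition \ref{z2^k}, the infinitely many pairwise distinct involutions $1+2^{r_0-1}x^m$ for $m\geq1$ (distinct because $2^{r_0-1}\neq0$ in $\mathbb{Z}_{2^{r_0}}$); pairing each with the involution $1$ in every odd factor and pulling back through the decomposition yields infinitely many involutions of $\mathbb{Z}_n[x]$, so $\inv(\mathbb{Z}_n[x])$ is infinite. I do not anticipate any serious obstacle in the whole argument: once Propositions \ref{zp^k} and \ref{z2^k} are in hand it is essentially bookkeeping with the Chinese Remainder theorem, and the only point demanding a moment's care is the second assertion of part (1) --- using $\inv(\mathbb{Z}_n[x])=\inv(\mathbb{Z}_n)$ to see that no edge of $\G(\mathbb{Z}_n[x])$ leaves a $\mathbb{Z}_n$-coset, so that each coset carries a copy of $\G(\mathbb{Z}_n)$ and there are countably infinitely many of them.
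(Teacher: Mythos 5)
Your argument is correct, and for the involution count in part (1) and for part (2) it coincides with the paper's proof (CRT decomposition of $\mathbb{Z}_n[x]$, the identity $\inv(R\times S)=\inv(R)\times\inv(S)$, and Propositions \ref{zp^k} and \ref{z2^k}; for part (2) you merely make the infinitude concrete via the involutions $1+2^{r_0-1}x^m$). Where you genuinely diverge is the second assertion of part (1). The paper argues structurally: $\G(\mathbb{Z}_n)$ is a $2^t$-regular subgraph of the $2^t$-regular graph $\G(\mathbb{Z}_n[x])$, hence a connected component, and all connected components of a Cayley graph are isomorphic. You instead observe that when $r_0\in\{0,1\}$ every involution of $\mathbb{Z}_n[x]$ is a constant, so $\inv(\mathbb{Z}_n[x])=\inv(\mathbb{Z}_n)$, and then decompose the vertex set into the additive cosets $h+\mathbb{Z}_n$ with $h\in x\mathbb{Z}_n[x]$: no edge leaves a coset, and translation by $h$ carries $\G(\mathbb{Z}_n)$ isomorphically onto the subgraph induced on $h+\mathbb{Z}_n$. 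This is more elementary and more explicit --- it exhibits the copies concretely and avoids invoking both the regularity argument and the general fact about components of Cayley graphs --- at the cost of the extra (easy, and worth stating) verification that constancy of all reductions modulo the prime-power factors forces constancy in $\mathbb{Z}_n[x]$. Your parenthetical claim that the cosets are exactly the connected components additionally uses the connectedness of $\G(\mathbb{Z}_n)$ (via the cycle $0\sim1\sim\cdots\sim(n-1)\sim0$), which the paper also records; it is not needed for the statement itself, which only asks for a disjoint union of copies of $\G(\mathbb{Z}_n)$. Both routes are sound.
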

\begin{proof}
Note that for any two rings $R$ and $S$, we have
$$(R\times S)[x] \simeq R[x]\times S[x].$$
So, 
$$\mathbb{Z}_n[x]\simeq \mathbb{Z}_{2^{r_0}}[x]\times \mathbb{Z}_{p_1^{r_1}}[x]\times \cdots \times \mathbb{Z}_{p_t^{r_t}}[x],$$
which implies that
$$\inv(\mathbb{Z}_n[x])=\inv(\mathbb{Z}_{2^{r_0}}[x])\times \inv(\mathbb{Z}_{p_1^{r_1}}[x])\times \cdots \times \inv(\mathbb{Z}_{p_t^{r_t}}[x]).$$
Now, by Propositions \ref{zp^k} and \ref{z2^k}, the result follows.

Furthermore, for the last statement of part (i), note that if $r_0\in \{0,1\}$, then  $\G(\mathbb{Z}_n)$ is a $2^t$-regular subgraph of the $2^t$-regular graph $\G(\mathbb{Z}_n[x])$. Hence, $\G(\mathbb{Z}_n)$ is a connected component of $\G(\mathbb{Z}_n[x])$. Since all connected components of a Cayley graph are isomorphic, the result follows.
\end{proof}

As an immediate application of the above theorem, we obtain the following corollary.

\begin{corollary}\label{pk2pk}
Let $n\geq 2$ be a positive integer. Then $|\inv(\mathbb{Z}_n[x])|=2$ if and only if $n=p^k$ or $n=2p^k$ for some odd prime number $p$ and positive integer $k$. 

Consequently, the graphs $\G(\mathbb{Z}_{p^k}[x])$ and $\G(\mathbb{Z}_{2p^k}[x])$ consist precisely of  infinitely many disjoint cycles of length $p^k$ and $2p^k$, respectively.
\end{corollary}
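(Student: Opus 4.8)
The plan is to deduce this corollary directly from Theorem~\ref{main} together with the structural results on $\G(\mathbb{Z}_n)$ recalled in the introduction. First I would prove the numerical claim $|\inv(\mathbb{Z}_n[x])|=2$ iff $n=p^k$ or $2p^k$. Write $n=2^{r_0}p_1^{r_1}\cdots p_t^{r_t}$ as in Theorem~\ref{main}. If $r_0\geq 2$, then $\inv(\mathbb{Z}_n[x])$ is infinite by part~(ii), so in particular it has more than $2$ elements; if $r_0\in\{0,1\}$, then $|\inv(\mathbb{Z}_n[x])|=2^t$ by part~(i), and $2^t=2$ forces $t=1$. Thus $|\inv(\mathbb{Z}_n[x])|=2$ exactly when $r_0\in\{0,1\}$ and $t=1$, i.e.\ $n=p_1^{r_1}$ or $n=2p_1^{r_1}$ with $p_1$ an odd prime, which is the asserted condition.

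For the ``consequently'' part, let $n=p^k$ or $2p^k$. By part~(i) of Theorem~\ref{main}, $\G(\mathbb{Z}_n[x])$ is an infinite disjoint union of copies of $\G(\mathbb{Z}_n)$, so it suffices to show that $\G(\mathbb{Z}_n)$ is itself a single cycle of length $n$. One way is to invoke the cycle classification from \cite{inv} recalled in the introduction: $\G(R)$ is a cycle precisely when $R$ is one of $\mathbb{Z}_2[x]/\langle x^2\rangle$, $\mathbb{Z}_4$, $\mathbb{Z}_{p^k}$, or $\mathbb{Z}_{2p^k}$, and in the latter two cases the girth computation in Lemma~\ref{1.1}(v) gives cycle length $n$. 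Alternatively, and more self-containedly, I would argue directly: by Lemma~\ref{invloc} and the product formula, $\inv(\mathbb{Z}_n)=\{\pm 1\}$ for $n=p^k$ or $2p^k$, so $\G(\mathbb{Z}_n)$ is $2$-regular with no isolated vertices, hence a disjoint union of cycles; since $1$ generates the cyclic group $(\mathbb{Z}_n,+)$, the vertices $0,1,2,\ldots,n-1$ form a single closed walk using only the connection elements $\pm 1$, so the graph is connected, i.e.\ one $n$-cycle. Combining, $\G(\mathbb{Z}_{p^k}[x])$ is an infinite disjoint union of $p^k$-cycles and $\G(\mathbb{Z}_{2p^k}[x])$ an infinite disjoint union of $2p^k$-cycles.

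There is essentially no hard step here: the corollary is a packaging of Theorem~\ref{main} and Lemma~\ref{1.1}. The only point requiring a little care is making sure the copies of $\G(\mathbb{Z}_n)$ appearing inside $\G(\mathbb{Z}_n[x])$ really are full connected components (not merely subgraphs), which is exactly what the last paragraph of the proof of Theorem~\ref{main} establishes via the regularity/degree argument and the fact that all components of a Cayley graph are isomorphic. With that in hand, identifying each component as an $n$-cycle is immediate from the $2$-regularity and connectedness of $\G(\mathbb{Z}_n)$.
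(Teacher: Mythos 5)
Your proposal is correct and follows essentially the paper's own route: the counting claim is read off from Theorem \ref{main}, and the cycle structure comes from $2$-regularity together with the cycle obtained by repeatedly adding $1$. The only cosmetic difference is that the paper argues directly in $\G(\mathbb{Z}_n[x])$ (every $f$ lies on the cycle $f\sim f+1\sim\cdots\sim f+n-1\sim f$, so $2$-regularity forces each component to be that $n$-cycle), whereas you route through the component decomposition of Theorem \ref{main}(i) and then identify $\G(\mathbb{Z}_n)$ as a single $n$-cycle; both are valid and amount to the same argument.
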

\begin{proof}
We only have to prove the "Consequently" part. Note that any $f\in \mathbb{Z}_n[x]$, lies in the cycle 
$$f \sim (f+1) \sim (f+2) \sim \cdots \sim (f+n-1) \sim f$$
in $\G(\mathbb{Z}_n[x])$. So, if $n=p^k$ or $n=2p^k$ for some odd prime number $p$ and positive intger $k$, then $\G(\mathbb{Z}_n[x])$ is a $2$-regular graph, making it a disjoint union of cycles of length $n$ and hence, the result follows.
\end{proof}

\begin{remark}
It is not difficult to verify that all the results of this section remain valid when $\mathbb{Z}_n[x]$ is replaced by $\mathbb{Z}_n[[x]]$.
\end{remark}


\section{Involutory Cayley graphs of $\mathbb{Z}_n[x]$ and $\mathbb{Z}_n[[x]]$}

In this section, we study the involutory Cayley graphs of  $\mathbb{Z}_n[x]$. As in Section 2, we conclude by noting that all results also hold for $\mathbb{Z}_n[[x]]$.

We begin  by examining the connectedness of  $\G(\mathbb{Z}_n[x])$.  It is noteworthy that $\G(\mathbb{Z}_n)$ is always connected,  as it contains the cycle
$$0\sim 1 \sim 2 \sim \cdots \sim (n-1) \sim 0,$$
which connects all the elements. However, the situation is quite the opposite for polynomial rings.

\begin{thm}\label{com}
For any $n\geq 2$, the involutory Cayley graph of $\mathbb{Z}_n[x]$ has infinitely many connected components.
\end{thm}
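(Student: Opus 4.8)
The plan is to exploit the structure established in Theorem~\ref{main} by splitting into the two cases $r_0 \in \{0,1\}$ and $r_0 \geq 2$ according to the $2$-adic valuation of $n$. In the first case, Theorem~\ref{main} already tells us that $\G(\mathbb{Z}_n[x])$ is an infinite disjoint union of copies of $\G(\mathbb{Z}_n)$, so there is literally nothing left to do — infinitely many components, each isomorphic to $\G(\mathbb{Z}_n)$. The content is therefore entirely in the case $r_0 \geq 2$, where $\inv(\mathbb{Z}_n[x])$ is infinite and the components are correspondingly larger, so the naive regularity-counting argument no longer closes them off.

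For the case $r_0 \geq 2$, the strategy is to produce an explicit ring homomorphism $\varphi \colon \mathbb{Z}_n[x] \to S$ for a suitable target $S$ with the property that $\varphi$ carries involutions to involutions (any unital ring homomorphism does this automatically, since $u^2 = 1 \Rightarrow \varphi(u)^2 = 1$) and such that two vertices $f, g$ in the same connected component of $\G(\mathbb{Z}_n[x])$ must satisfy $\varphi(f) \equiv \varphi(g)$ in some invariant that takes infinitely many values. Concretely, if $f \sim g$ then $f - g \in \inv(\mathbb{Z}_n[x])$, so $\varphi(f) - \varphi(g) \in \inv(S)$; iterating along a path shows $\varphi(f) - \varphi(g)$ lies in the \emph{subgroup} of $(S,+)$ generated by $\inv(S)$. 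So the number of components of $\G(\mathbb{Z}_n[x])$ is at least the index of $\langle \inv(S) \rangle$ in $(S,+)$, and it suffices to choose $S$ and $\varphi$ making that index infinite. A natural choice is $\varphi$ the identity and $S = \mathbb{Z}_n[x]$ itself: by Proposition~\ref{z2^k} (transported through the product decomposition $\mathbb{Z}_n[x] \simeq \mathbb{Z}_{2^{r_0}}[x] \times \cdots$), every element of $\inv(\mathbb{Z}_n[x])$ has the form (constant involution) $+ 2^{r_0 - 1}(\text{something})$ in the $2$-primary coordinate and $\pm 1$ in each odd-primary coordinate. Reducing modulo $x$ and looking only at coefficients of $x^i$ for $i \geq 1$, every involution's positive-degree part is killed by $2^{r_0-1}$, hence the coefficient of $x$ in any element of $\langle \inv(\mathbb{Z}_n[x])\rangle$ is a multiple of $\gcd(2^{r_0-1}, \ldots)$; comparing, say, the polynomials $0, x, 2x, 3x, \dots$ shows no two of them lie in the same component, giving infinitely many components.

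The cleanest way to package this: define the homomorphism $\psi \colon \mathbb{Z}_n[x] \to \mathbb{Z}_n$ sending $x \mapsto 0$ won't detect anything about higher coefficients, so instead use the additive map recording the coefficient of $x$ modulo the relevant subgroup — i.e., observe that the ``coefficient of $x$'' gives a group homomorphism $\mathbb{Z}_n[x] \to \mathbb{Z}_n$ of additive groups, and that this homomorphism sends $\inv(\mathbb{Z}_n[x])$ into $2^{r_0-1}\mathbb{Z}_n \times \{0\}^t$ viewed inside $\mathbb{Z}_n \simeq \mathbb{Z}_{2^{r_0}} \times \prod \mathbb{Z}_{p_i^{r_i}}$, a proper subgroup $H$ when $r_0 \geq 2$. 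Since adjacency changes the coefficient of $x$ only by an element of $H$, the composite map $\mathbb{Z}_n[x] \to \mathbb{Z}_n / H$ is constant on each connected component; but this composite is surjective onto a nontrivial group, and moreover the polynomials $0, x, x + (\text{lift of generator}), \dots$ — more simply, the infinitely many polynomials $j x$ for $j \in \mathbb{Z}$ (or rather, we need the target infinite) — hmm, $\mathbb{Z}_n/H$ is finite, so to get \emph{infinitely} many components I should instead use infinitely many coefficients at once: map $f = \sum a_i x^i \mapsto (a_1, a_2, a_3, \dots) \in \bigoplus_{i \geq 1} \mathbb{Z}_n/H$ (a group homomorphism, since $H$-membership is checked coordinatewise and sums behave), note adjacency moves this image only within $\bigoplus_{i\geq 1} 0 = 0$ — wait, that's not right either since the involution's higher coefficients lie in $H$, not necessarily $0$. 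Fine: the image lands in $\bigoplus H$, so the composite to $\bigoplus_{i\geq 1}(\mathbb{Z}_n/H)$ is component-constant, and since $H \subsetneq \mathbb{Z}_n$ this target is an infinite group hit surjectively by $\{ \sum_{i\geq 1} b_i x^i \}$. Hence infinitely many components.

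I expect the main obstacle to be bookkeeping, not ideas: one must correctly transport Proposition~\ref{z2^k} and Proposition~\ref{zp^k} through the isomorphism $\mathbb{Z}_n[x] \simeq \prod_j \mathbb{Z}_{q_j}[x]$ to get a clean description of $\inv(\mathbb{Z}_n[x])$ showing that every involution's coefficient of $x^i$ (for $i \geq 1$) lies in the fixed proper subgroup $H := 2^{r_0-1}\mathbb{Z}_{2^{r_0}} \times \{0\} \times \cdots \times \{0\} \subsetneq \mathbb{Z}_n$, and then verifying the wholly routine facts that ``lies in $\langle \inv \rangle$'' is preserved along paths and that the coefficient-extraction maps are additive group homomorphisms. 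Provided $r_0 \geq 2$ so that $2^{r_0 - 1} \not\equiv 0 \pmod{2^{r_0}}$ and hence $H \neq \mathbb{Z}_n$, the argument closes; and for $r_0 \in \{0,1\}$ we simply quote Theorem~\ref{main}(i) directly. I would present the two cases in that order, leading with the trivial one.
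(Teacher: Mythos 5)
Your argument is correct, and it rests on the same essential ingredients as the paper's: for $r_0\in\{0,1\}$ you quote Theorem~\ref{main}(i), and for $r_0\geq 2$ the decisive fact is Proposition~\ref{z2^k} (transported through $\mathbb{Z}_n[x]\simeq \mathbb{Z}_{2^{r_0}}[x]\times\prod_i\mathbb{Z}_{p_i^{r_i}}[x]$), which forces every positive-degree coefficient of an involution into the proper subgroup $H=2^{r_0-1}\mathbb{Z}_{2^{r_0}}\times\{0\}\times\cdots\times\{0\}$ of $\mathbb{Z}_n$, so that $1,x,x^2,\ldots$ end up in pairwise distinct components --- exactly the witnesses the paper uses. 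The packaging differs, though: the paper first decomposes the graph as a direct product $\G(\mathbb{Z}_{2^{r_0}}[x])\otimes\G(\mathbb{Z}_{p_1^{r_1}}[x])\otimes\cdots$ and shows each factor separately has infinitely many components (cycles of length $p_i^{r_i}$ via Corollary~\ref{pk2pk}, $1$-regularity for $\mathbb{Z}_2[x]$, and the coefficient-divisibility constraint for $r_0\geq 2$), implicitly using that a direct product inherits infinitely many components from a factor; you instead work globally in $\mathbb{Z}_n[x]$ with a single additive component-invariant, the map $\sum_i a_ix^i\mapsto(a_1,a_2,\ldots)\in\bigoplus_{i\geq 1}\mathbb{Z}_n/H$, which is constant on components and surjects onto an infinite group. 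Your route avoids the graph-tensor-product step entirely, at the cost of the CRT bookkeeping you flag; both are sound. One presentational remark: your second and third paragraphs contain several mid-stream corrections (e.g.\ the false start with the finite target $\mathbb{Z}_n/H$); in a final write-up state only the corrected invariant, check explicitly that $H\neq\mathbb{Z}_n$ when $r_0\geq 2$, and note that constancy along edges (differences of adjacent vertices are involutions, whose positive-degree coefficients lie in $H$) propagates along paths.
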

\begin{proof}
Suppose that 
$n=2^{r_0}\times p_1^{r_1}\times \cdots \times p_t^{r_t}$, where $r_i$ are nonnegative integers and $p_i$ are distinct odd prime numbers. Then, we have the decomposition
$$\G(\mathbb{Z}_n[x])\simeq\G(\mathbb{Z}_{2^{r_0}}[x])\otimes \G(\mathbb{Z}_{p_1^{r_1}}[x])\otimes \cdots \otimes \G(\mathbb{Z}_{p_t^{r_t}}[x]).$$
 In the following, we show that each factor in this direct product has infinitely many connected components.

According to Corollary \ref{pk2pk}, the graph $\G(\mathbb{Z}_{p_i^{r_i}}[x])$  is a disjoint union of infinitely many cycles of length $p_i^{r_i}$. Also, $\G(\mathbb{Z}_2[x])$ is $1$-regular by Theorem \ref{main}, having infinite connected components.

Now, consider the graph $\G(\mathbb{Z}_{2^{r_0}}[x])$, where $r_0\geq 2$. Suppose  $f=\sum_{i=0}^{s}a_ix^i$ and $g=\sum_{i=0}^{m}b_ix^i$ are adjacent vertices in this graph. Then, by Proposition \ref{z2^k}, we have  $2^{r_0-1}\mid a_i - b_i$ in $\mathbb{Z}_{2^{r_0}}$ for all $i\geq 1$.  In particular, this implies that the elements of the set $\{1,x,x^2,\ldots\}$ lie in different connected components. Hence, the result follows.
\end{proof}

\begin{corollary}
For any $n\geq 2$, the involutory Cayley graph of $\mathbb{Z}_n[x]$ is disconnected.
\end{corollary}

In the following theorem, we characterize when $\G(\mathbb{Z}_n[x])$ is bipartite. Recall that a \textit{bipartite graph} is a graph whose vertex set can be partitioned into two disjoint subsets such that every edge connects a vertex in one subset to a vertex in the other.

\begin{thm}\label{bip}
For a positive integer $n\geq 2$, the following conditions are equivalent. 
\begin{enumerate}
\item The involutory Cayley graph of $\mathbb{Z}_n[x]$ is bipartite;

\item The involutory Cayley graph of $\mathbb{Z}_n$ is bipartite;

\item $n$ is an even integer.
\end{enumerate}
\end{thm}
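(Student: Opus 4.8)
The plan is to prove the cyclic chain of implications (i)$\Rightarrow$(ii)$\Rightarrow$(iii)$\Rightarrow$(i). Of these, the equivalence (ii)$\Leftrightarrow$(iii) is already recorded as part (i) of Lemma \ref{1.1}, so the real content lies in (i)$\Rightarrow$(ii) and in (iii)$\Rightarrow$(i).

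For (i)$\Rightarrow$(ii) I would simply observe that $\G(\mathbb{Z}_n)$ occurs inside $\G(\mathbb{Z}_n[x])$ as an induced subgraph, namely the one induced on the constant polynomials: for $a,b\in\mathbb{Z}_n$ the difference $a-b$ is a constant, so $(a-b)^2=1$ holds in $\mathbb{Z}_n[x]$ if and only if it holds in $\mathbb{Z}_n$. Since every subgraph of a bipartite graph is bipartite, this gives (i)$\Rightarrow$(ii) at once.

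The substantive implication is (iii)$\Rightarrow$(i). Assume $n$ is even. I would use the Cayley-graph structure of $\G(\mathbb{Z}_n[x])$: it suffices to exhibit a group homomorphism $\psi\colon(\mathbb{Z}_n[x],+)\to\mathbb{Z}_2$ sending every element of $\inv(\mathbb{Z}_n[x])$ to $1$. Indeed, given such a $\psi$, any closed walk $f_0\sim f_1\sim\cdots\sim f_k=f_0$ in $\G(\mathbb{Z}_n[x])$ has successive differences $f_{j+1}-f_j\in\inv(\mathbb{Z}_n[x])$ summing to $0$, so applying $\psi$ yields $k\cdot 1=0$ in $\mathbb{Z}_2$; hence $k$ is even, $\G(\mathbb{Z}_n[x])$ has no odd cycle, and it is bipartite with parts $\psi^{-1}(0)$ and $\psi^{-1}(1)$. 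The natural candidate is $\psi(f)=f(0)+2\mathbb{Z}$, the constant term of $f$ reduced modulo $2$; it is well defined and additive precisely because $2\mid n$. To finish, if $u\in\inv(\mathbb{Z}_n[x])$ then comparing constant terms in $u^2=1$ gives $a_0^2=1$ in $\mathbb{Z}_n$ for the constant term $a_0$ of $u$, and since $n$ is even this forces $a_0$ to be odd, so $\psi(u)=1$, as needed.

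I do not expect a genuine obstacle; the only points requiring a line of justification are that adjacency among constant polynomials in $\mathbb{Z}_n[x]$ coincides with adjacency in $\mathbb{Z}_n$, and that a square root of $1$ modulo an even number is odd — both immediate. One could instead deduce (iii)$\Rightarrow$(i) from the explicit description of $\inv(\mathbb{Z}_n[x])$ in Section 2 together with the decomposition $\G(\mathbb{Z}_n[x])\simeq\G(\mathbb{Z}_{2^{r_0}}[x])\otimes\G(\mathbb{Z}_{p_1^{r_1}}[x])\otimes\cdots\otimes\G(\mathbb{Z}_{p_t^{r_t}}[x])$ and the elementary fact that a direct product of graphs is bipartite whenever one of its factors is; but that route still needs the same remark about constant terms (applied to $\mathbb{Z}_{2^{r_0}}[x]$) and involves a small case split on $r_0$, so the homomorphism argument seems preferable.
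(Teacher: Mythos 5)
Your proposal is correct and follows essentially the same route as the paper: your bipartition $\psi^{-1}(0)\cup\psi^{-1}(1)$ by parity of the constant term is exactly the paper's partition into $A=\{f: f(0)\in 2\mathbb{Z}_n\}$ and $B=\{f: f(0)\notin 2\mathbb{Z}_n\}$, with the key point in both cases being that an involution of $\mathbb{Z}_n[x]$ has odd constant term when $n$ is even. The only cosmetic difference is that for odd $n$ you pass through the constant-polynomial subgraph and Lemma \ref{1.1}, whereas the paper exhibits the odd cycle $0\sim 1\sim\cdots\sim(n-1)\sim 0$ directly.
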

\begin{proof}
If $n$ is an odd integer, then $\G(\mathbb{Z}_n[x])$ contains the cycle
$$0\sim 1 \sim \cdots \sim (n-1)\sim 0,$$
which has odd length. Therefore, $\G(\mathbb{Z}_n[x])$ cannot be bipartite. 

On the other hand, suppose $n$ is  even. Then for any $u\in \mathbb{Z}_n$ with $u^2=1$, we have $n\mid u^2-1$, which implies $u\not\in 2\mathbb{Z}_n$. In other words, the set $2\mathbb{Z}_n$ contains no involutory elements. Now, by setting 
$$A=\{f(x) \in \mathbb{Z}_n[x]\mid f(0)\in 2\mathbb{Z}_n\}$$
and 
$$B=\{f(x) \in \mathbb{Z}_n[x]\mid f(0) \not\in 2\mathbb{Z}_n\},$$
it is not hard to verify that $A\cup B$ forms a bipartition of the graph $\G(\mathbb{Z}_n[x])$. This, together with Lemma \ref{1.1}, completes the proof.
\end{proof}

Next, we determine the conditions under which the graph $\G(\mathbb{Z}_n[x])$ is planar. Recall that a graph is said to be \textit{planar} if it can be embedded in the plane such that no two edges intersect except at their endpoints.

\begin{thm}
For a positive integer $n\geq 2$, the following conditions are equivalent.
\begin{enumerate}
\item 
The involutory Cayley graph of $\mathbb{Z}_n[x]$ is planar;

\item 
The involutory Cayley graph of $\mathbb{Z}_n$ is planar and $n\ne 4$;

\item 
$n=2$, $p^k$ or $2p^k$ for some odd prime number $p$ and positive integer $k$.
\end{enumerate}
\end{thm}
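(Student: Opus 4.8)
The plan is to prove the equivalence by establishing (iii) $\Rightarrow$ (i) and (i) $\Rightarrow$ (ii) $\Rightarrow$ (iii), using the structure results of Section 2 together with Lemma \ref{1.1}(ii).

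First I would handle (iii) $\Rightarrow$ (i). If $n=2$, then by Proposition \ref{z2^k} we have $\inv(\mathbb{Z}_2[x])=\{1\}$, so $\G(\mathbb{Z}_2[x])$ is $1$-regular, hence a disjoint union of edges, which is trivially planar. If $n=p^k$ or $2p^k$ for an odd prime $p$, then by Corollary \ref{pk2pk} the graph $\G(\mathbb{Z}_n[x])$ is a disjoint union of infinitely many cycles (of length $p^k$ or $2p^k$). A disjoint union of cycles is planar, so (i) holds.

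Next, (i) $\Rightarrow$ (ii): suppose $\G(\mathbb{Z}_n[x])$ is planar. Since $\G(\mathbb{Z}_n)$ embeds as a subgraph (in fact as a union of connected components, by Theorem \ref{main}, when $r_0\le 1$; and in general $\mathbb{Z}_n$ sits inside $\mathbb{Z}_n[x]$ as the constant polynomials, so $\G(\mathbb{Z}_n)$ is an induced subgraph of $\G(\mathbb{Z}_n[x])$), planarity of $\G(\mathbb{Z}_n[x])$ forces planarity of $\G(\mathbb{Z}_n)$. It remains to rule out $n=4$: by Proposition \ref{z2^k}, $\inv(\mathbb{Z}_4[x])=\{a_0+2f\mid a_0\in\{\pm1\},\ f\in\mathbb{Z}_4[x]\}$, which is an infinite set, so $\G(\mathbb{Z}_4[x])$ is an infinite-degree regular graph; an infinite graph of unbounded (indeed infinite) degree cannot be planar — concretely it contains $K_{3,3}$ or even arbitrarily large complete bipartite subgraphs. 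So $n\neq 4$, giving (ii). Finally (ii) $\Rightarrow$ (iii) is immediate from Lemma \ref{1.1}(ii): the planar values of $n$ for $\G(\mathbb{Z}_n)$ are exactly $2,4,p^k,2p^k$, and excluding $n=4$ leaves precisely the list in (iii).

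The main obstacle I expect is being careful about the case $n=4$ (and more generally any $n$ divisible by $4$): one must argue cleanly that an infinite involution set produces a non-planar graph. The cleanest route is to observe that when $r_0\ge 2$, the set $\inv(\mathbb{Z}_{2^{r_0}}[x])$ contains $1, 1+2^{r_0-1}x, 1+2^{r_0-1}x^2, \dots$, so a single vertex has infinitely many neighbours, and then exhibit a $K_{3,3}$ or $K_5$ subdivision — for instance, three vertices $f, g, h$ at pairwise even "distance'' together with three common neighbours — to invoke Kuratowski's theorem. Alternatively, since $\G(\mathbb{Z}_n[x])$ decomposes as a direct product over the prime power factors of $n$ (as in the proof of Theorem \ref{com}), one notes that a direct product containing an infinite-degree factor is itself of infinite degree, hence non-planar. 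Apart from this point, everything reduces to bookkeeping with the already-established propositions and Lemma \ref{1.1}(ii).
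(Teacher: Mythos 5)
Your overall architecture is the paper's: (iii)$\Rightarrow$(i) via the $1$- and $2$-regularity from Proposition \ref{z2^k} and Corollary \ref{pk2pk}, (i)$\Rightarrow$(ii) via the constant-polynomial subgraph plus ruling out $n=4$, and (ii)$\Rightarrow$(iii) from Lemma \ref{1.1}. The one genuine problem is the step where you rule out $n=4$: your primary justification is the principle that ``an infinite graph of unbounded (indeed infinite) degree cannot be planar,'' and you invoke it again in the ``alternatively'' remark about direct products. That principle is false: the infinite star $K_{1,\aleph_0}$ has a vertex of infinite degree and embeds in the plane (place the leaves on a circle and draw radial edges meeting only at the centre), so infinite valency by itself does not obstruct planarity. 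Non-planarity of $\G(\mathbb{Z}_4[x])$ therefore cannot be deduced from the mere infinitude of $\inv(\mathbb{Z}_4[x])$; it must be shown by actually exhibiting a forbidden subgraph, and as stated the claim ``concretely it contains $K_{3,3}$'' is an assertion, not an argument.

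The good news is that your fallback sketch (``three vertices $f,g,h$ at pairwise even distance together with three common neighbours'') is exactly the right move and is what the paper does: since $-1+2f=1+2(f+1)$ in $\mathbb{Z}_4[x]$, Proposition \ref{z2^k} gives $\inv(\mathbb{Z}_4[x])=1+2\mathbb{Z}_4[x]$, so \emph{any} three elements of the coset $2\mathbb{Z}_4[x]$ together with \emph{any} three elements of $1+2\mathbb{Z}_4[x]$ induce a copy of $K_{3,3}$; the paper takes $\{0,2,2x\}$ and $\{1,3,1+2x\}$. If you replace the infinite-degree argument by this two-line coset computation (or the explicit six vertices), your proof becomes complete and coincides with the paper's. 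A further small point: when quoting Lemma \ref{1.1} you should note that any $n$ divisible by $4$ other than $4$ itself is already excluded there, so $n=4$ is genuinely the only case needing the extra $K_{3,3}$ argument; your write-up implicitly uses this but it is worth saying.
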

\begin{proof}
First, suppose that $\G(\mathbb{Z}_n[x])$ is planar. Since $\G(\mathbb{Z}_n)$ is a subgraph of $\G(\mathbb{Z}_n[x])$, it follows that $\G(\mathbb{Z}_n)$ must also be planar. By Lemma \ref{1.1}, this implies that $n=2$, $4$, $p^k$ or $2p^k$ for some odd prime number $p$ and positive integer $k$. 

However, $\G(\mathbb{Z}_4[x])$ is not planar. To see this, note that any element of $A=\{0,2,2x\}$ is adjacent to every element of $1+A=\{1,3,1+2x\}$,  implying that  the subgraph induced by  $A\cup (1+A)$ is the complete bipartite graph $K_{3,3}$, which is nonplanar.

On the other hand, by Proposition \ref{z2^k} and Corollary \ref{pk2pk}, we know that $\G(\mathbb{Z}_2[x])$ is $1$-regular, while  $\G(\mathbb{Z}_{p^k}[x])$ and $\G(\mathbb{Z}_{2p^k}[x])$ are $2$-regular. These graphs are clearly planar. This completes the proof.
\end{proof}

An \textit{independent set} in a graph is a set of vertices no two of which are adjacent. The \textit{independence number} of a graph is the cardinality of a maximum independent set. According to \cite[Theorem 4.3]{inv}, the independence number of $\G(\mathbb{Z}_n)$ is equal to $n/2$ when  $n$ is even. Moreover, if $n=p_1^{r_1} \cdots  p_t^{r_t}$ is the prime decomposition of an odd integer $n$ with
$p_1^{r_1}\geq \cdots \geq p_t^{r_t}$, then the independence number of $\G(\mathbb{Z}_n)$ is given by
$(p_1^{r_1}-1)n/2p_1^{r_1}$. In contrast, we have the following theorem for $\G(\mathbb{Z}_n[x])$.

\begin{thm}\label{alpha}
For any positive integer $n \geq 2$, the independence number of $\G(\mathbb{Z}_n[x])$ is infinite (in fact, it is equal to $\aleph_0$).
\end{thm}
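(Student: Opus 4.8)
The plan is to squeeze the independence number between two matching bounds, with the upper one being almost immediate. Since $\mathbb{Z}_n[x]$ is a countably infinite set, so is its vertex set, and hence every independent set of $\G(\mathbb{Z}_n[x])$ has cardinality at most $\aleph_0$. It then remains only to exhibit a single independent set of size $\aleph_0$.

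For this I would take $I = x\mathbb{Z}_n[x]$, the ideal of all polynomials with zero constant term, which is obviously countably infinite. To check that $I$ is an independent set, pick distinct $f,g \in I$: then $f - g \in I$, so the constant term of $(f-g)^2$ equals $(f(0)-g(0))^2 = 0 \neq 1$ (using $n \geq 2$), whence $(f-g)^2 \neq 1$ and $f \not\sim g$. Thus the independence number is at least $|I| = \aleph_0$, and together with the upper bound it equals $\aleph_0$. If one prefers a genuine maximum independent set rather than a supremum of sizes, extend $I$ to a maximal independent set via Zorn's lemma; such a set contains $I$ and sits inside a countable set, so it is countably infinite. The very same construction, applied to the ideal of power series with zero constant term, shows that $\G(\mathbb{Z}_n[[x]])$ has infinite independence number as well.

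The main point to be careful about — and essentially the only one — is the fact underlying the construction: $0 \notin \inv(\mathbb{Z}_n)$ for every $n \geq 2$, so that a difference of two elements of $I$ can never be an involution of $\mathbb{Z}_n[x]$; indeed, by Propositions \ref{zp^k} and \ref{z2^k}, every involution of $\mathbb{Z}_n[x]$ has an involutory, and in particular nonzero, constant term. As an alternative to the ideal $I$, one could instead derive the lower bound from Theorem \ref{com}: since $\G(\mathbb{Z}_n[x])$ has infinitely many connected components, any transversal of these components is automatically an infinite independent set.
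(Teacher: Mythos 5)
Your proof is correct and takes essentially the same route as the paper: the paper exhibits the explicit countably infinite independent set $\{1,x,x^2,\ldots\}$ (citing Theorem \ref{com}), while you use the slightly larger set $x\mathbb{Z}_n[x]$, resting on the same observation that a difference whose constant term is $0$ cannot square to $1$. Your added remarks (the trivial upper bound $\aleph_0$ from countability, and the alternative lower bound via a transversal of the infinitely many components) are also sound.
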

\begin{proof}
It is not difficult to see that $\{1,x,x^2,\ldots\}$ is an independent set (also, see Theorem \ref{com}). From this, the result follows.
\end{proof}

In the next theorem, we determine the chromatic number of $\G(\mathbb{Z}_n[x])$. Recall that the \textit{chromatic number} $\chi(G)$ of a graph $G$ is the minimum number of colors needed to color vertices of $G$ such that no two adjacent vertices receive the same color.

\begin{thm}\label{chi}
Let $n\geq 2$ be a positive integer. Then
\[   
\chi\b(\G(\mathbb{Z}_n[x])\b)=\chi\b(\G(\mathbb{Z}_n)\b) = 
\begin{cases}
	2 &\qquad n  \text{  is even,}\\
	3 & \qquad n   \text{ is odd.} \\ 
\end{cases}
\]
\end{thm}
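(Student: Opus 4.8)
The plan is to prove the two claimed equalities in the displayed chain by sandwiching $\chi\bigl(\G(\mathbb{Z}_n[x])\bigr)$ between $\chi\bigl(\G(\mathbb{Z}_n)\bigr)$ from below and the explicit values $2$ and $3$ from above, then invoking Lemma \ref{1.1}(iv) which already gives $\chi\bigl(\G(\mathbb{Z}_n)\bigr)$.

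First I would establish the lower bound $\chi\bigl(\G(\mathbb{Z}_n[x])\bigr)\geq\chi\bigl(\G(\mathbb{Z}_n)\bigr)$. This is immediate since $\G(\mathbb{Z}_n)$ sits inside $\G(\mathbb{Z}_n[x])$ as an induced subgraph (the constant polynomials), and the chromatic number is monotone under taking subgraphs. Next, for the case $n$ even, the upper bound $\chi\bigl(\G(\mathbb{Z}_n[x])\bigr)\leq 2$ is exactly the content of Theorem \ref{bip}: a bipartite graph with at least one edge is $2$-chromatic, and $\G(\mathbb{Z}_n[x])$ has edges since it is $|\inv(\mathbb{Z}_n[x])|$-regular with $|\inv(\mathbb{Z}_n[x])|\geq 1$. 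Combined with $\chi\bigl(\G(\mathbb{Z}_n)\bigr)=2$ from Lemma \ref{1.1}(iv), both equalities follow in the even case.

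The substantive case is $n$ odd, where I need the upper bound $\chi\bigl(\G(\mathbb{Z}_n[x])\bigr)\leq 3$; together with the lower bound $\chi\bigl(\G(\mathbb{Z}_n[x])\bigr)\geq\chi\bigl(\G(\mathbb{Z}_n)\bigr)=3$ this closes the argument. The natural idea is to exhibit a proper $3$-coloring explicitly. Since adjacency in $\G(\mathbb{Z}_n[x])$ means the difference is involutory, and every involutory element of $\mathbb{Z}_n[x]$ has constant term lying in $\inv(\mathbb{Z}_n)\subseteq\{\pm 1,\ldots\}$ (by Propositions \ref{zp^k} and \ref{z2^k}, for $n$ odd in fact $\inv(\mathbb{Z}_n[x])$ has all its members with \emph{every} coefficient constrained), I would color $f$ according to $f(0)$ modulo a suitable partition of $\mathbb{Z}_n$. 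Concretely: a proper $3$-coloring of $\G(\mathbb{Z}_n)$ exists by Lemma \ref{1.1}(iv); pull it back along the ring homomorphism $f\mapsto f(0)$ to get a coloring $c$ of $\mathbb{Z}_n[x]$. If $f\sim g$ then $f-g\in\inv(\mathbb{Z}_n[x])$, so $f(0)-g(0)=(f-g)(0)\in\inv(\mathbb{Z}_n)$, meaning $f(0)\sim g(0)$ in $\G(\mathbb{Z}_n)$, hence $c(f)=c(g)$ would violate properness of the coloring of $\G(\mathbb{Z}_n)$. Thus $c$ is a proper $3$-coloring and $\chi\bigl(\G(\mathbb{Z}_n[x])\bigr)\leq 3$.

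The only thing to verify carefully — and the main (mild) obstacle — is that $f-g$ being involutory forces $(f-g)(0)$ to be involutory in $\mathbb{Z}_n$; but this is trivial since evaluation at $0$ is a ring homomorphism $\mathbb{Z}_n[x]\to\mathbb{Z}_n$, so it carries $(f-g)^2=1$ to $((f-g)(0))^2=1$. Hence the pullback argument works verbatim for \emph{any} $n$, and in fact gives the clean uniform statement $\chi\bigl(\G(\mathbb{Z}_n[x])\bigr)\leq\chi\bigl(\G(\mathbb{Z}_n)\bigr)$, which together with the subgraph lower bound yields equality $\chi\bigl(\G(\mathbb{Z}_n[x])\bigr)=\chi\bigl(\G(\mathbb{Z}_n)\bigr)$ in all cases, and then Lemma \ref{1.1}(iv) supplies the two explicit values. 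So I would actually streamline the write-up: prove the general equality $\chi\bigl(\G(\mathbb{Z}_n[x])\bigr)=\chi\bigl(\G(\mathbb{Z}_n)\bigr)$ first via the evaluation homomorphism (no need to split into even/odd), then quote Lemma \ref{1.1}(iv). I do not anticipate any real difficulty; the evaluation-homomorphism trick is the whole idea.
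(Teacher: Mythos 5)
Your argument is correct, but in the odd case it takes a genuinely different route from the paper. The paper handles $n$ even exactly as you do (bipartiteness via Theorem \ref{bip}), but for $n$ odd it invokes Theorem \ref{main}: since $r_0=0$, the graph $\G(\mathbb{Z}_n[x])$ is a disjoint union of copies of $\G(\mathbb{Z}_n)$, so the chromatic numbers coincide and Lemma \ref{1.1} finishes. That argument rests on the classification of $\inv(\mathbb{Z}_{p^k}[x])$ from Proposition \ref{zp^k}, i.e.\ on knowing the involutions explicitly. Your streamlined version needs none of that: evaluation at $0$ is a ring homomorphism, so it sends involutory differences to involutory differences, and hence it is a graph homomorphism (indeed a retraction onto the induced subgraph of constants) $\G(\mathbb{Z}_n[x])\to\G(\mathbb{Z}_n)$; pulling back an optimal coloring gives $\chi(\G(\mathbb{Z}_n[x]))\leq\chi(\G(\mathbb{Z}_n))$, and the subgraph inclusion gives the reverse inequality, uniformly in $n$ and with no even/odd split. (The one point you leave implicit --- that $f\sim g$ forces $f(0)\neq g(0)$ --- is immediate because $0\notin\inv(\mathbb{Z}_n)$ for $n\geq 2$, so there is no gap.) What each approach buys: the paper's proof is a one-liner given Theorem \ref{main}, which was needed elsewhere anyway, and it yields the stronger structural fact that the components are isomorphic to $\G(\mathbb{Z}_n)$; yours is more elementary and more general, proving $\chi(\G(R[x]))=\chi(\G(R))$ for an arbitrary commutative ring $R$ (and verbatim for $R[[x]]$ via the constant-term homomorphism), independently of any description of $\inv(R[x])$.
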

\begin{proof}
If $n$ is an even integer, by Theorem \ref{bip}, $\G(\mathbb{Z}_n[x])$ is a bipartite graph and hence $\chi\b(\G(\mathbb{Z}_n[x])\b)=2$.

Now, suppose $n$ is odd. By Theorem \ref{main}, the graph $\G(\mathbb{Z}_n[x])$ is a disjoint union of isomorphic copies $\G(\mathbb{Z}_n)$. Consequently, 
$$\chi\b(\G(\mathbb{Z}_n[x])\b)=\chi\b(\G(\mathbb{Z}_n)\b).$$
Taking Lemma \ref{1.1} into account, the result follows.
\end{proof}

As a consequence, we can determine  the clique number of $\G(\mathbb{Z}_n[x])$. Recall that the \textit{clique number} $\omega(G)$ of a graph $G$ is the cardinality of the largest set of vertices in which every pair is adjacent.

\begin{thm}\label{w}
	Let $n\geq 2$ be a positive integer. Then
	\[   
	\omega\b(\G(\mathbb{Z}_n[x])\b)=\omega\b(\G(\mathbb{Z}_n)\b) = 
	\begin{cases}
		2 &\qquad n  \ne 3,\\
		3 & \qquad n   =3 . \\ 
	\end{cases}
	\]
\end{thm}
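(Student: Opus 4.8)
The plan is to bound $\omega(\G(\mathbb{Z}_n[x]))$ between $\omega(\G(\mathbb{Z}_n))$ from below and from above, drawing on the structural results already established. For the lower bound, I would note that $\G(\mathbb{Z}_n)$ is an induced subgraph of $\G(\mathbb{Z}_n[x])$: for constants $a,b\in\mathbb{Z}_n$ we have $(a-b)^2=1$ in $\mathbb{Z}_n[x]$ exactly when $(a-b)^2=1$ in $\mathbb{Z}_n$. Hence every clique of $\G(\mathbb{Z}_n)$ is a clique of $\G(\mathbb{Z}_n[x])$, so $\omega(\G(\mathbb{Z}_n[x]))\ge\omega(\G(\mathbb{Z}_n))$; by Lemma \ref{1.1}(iii) this is already $\ge 3$ when $n=3$, while in general the edge $0\sim 1$ (since $(0-1)^2=1$) gives $\omega(\G(\mathbb{Z}_n[x]))\ge 2$.

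For the matching upper bound I would split on the parity of $n$. If $n$ is odd, then by Theorem \ref{main}(i) (taking $r_0=0$) the graph $\G(\mathbb{Z}_n[x])$ is a disjoint union of copies of $\G(\mathbb{Z}_n)$; since a clique must lie in a single connected component, $\omega(\G(\mathbb{Z}_n[x]))=\omega(\G(\mathbb{Z}_n))$, which equals $3$ for $n=3$ and $2$ otherwise by Lemma \ref{1.1}(iii). If $n$ is even, this reduction is unavailable --- indeed, when $4\mid n$ the set $\inv(\mathbb{Z}_n[x])$ is infinite by Theorem \ref{main}(ii) --- so instead I would invoke Theorem \ref{bip} (equivalently Theorem \ref{chi}): $\G(\mathbb{Z}_n[x])$ is bipartite, hence triangle-free, so $\omega(\G(\mathbb{Z}_n[x]))\le 2$; combined with the lower bound this forces $\omega(\G(\mathbb{Z}_n[x]))=2$, and since $n$ even implies $n\ne 3$ this agrees with $\omega(\G(\mathbb{Z}_n))=2$.

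Assembling the two cases yields $\omega(\G(\mathbb{Z}_n[x]))=\omega(\G(\mathbb{Z}_n))$ with the claimed values. I do not expect a genuine obstacle here; the only point needing care is that the even case really does require the bipartiteness input, since the ``disjoint union of copies of $\G(\mathbb{Z}_n)$'' description holds only for $r_0\le 1$. As an alternative that avoids the case split for the upper bound, one can argue directly: if $f\sim g\sim h\sim f$ is a triangle, then putting $u=f-g$ and $w=f-h$ one gets $u^2=w^2=1$ and $2wu=1$; the last relation makes $2$ a unit (so $n$ is odd) and gives $w=2^{-1}u$, whence $1=w^2=2^{-2}$, i.e.\ $n\mid 3$ and $n=3$. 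This shows $\omega(\G(\mathbb{Z}_n[x]))\le 2$ for every $n\ne 3$ in one stroke, although the comparison with $\omega(\G(\mathbb{Z}_n))$ still rests on Lemma \ref{1.1}(iii).
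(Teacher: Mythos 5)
Your main argument is correct and follows essentially the same route as the paper: for even $n$ the bipartiteness of $\G(\mathbb{Z}_n[x])$ (Theorem \ref{bip}) gives $\omega\le 2$, for odd $n$ the disjoint-union-of-copies description from Theorem \ref{main} reduces everything to $\omega\bigl(\G(\mathbb{Z}_n)\bigr)$, and Lemma \ref{1.1} supplies the values. Your closing alternative (from a triangle $f\sim g\sim h\sim f$ deduce $u^2=w^2=1$, $2wu=1$, hence $w=2^{-1}u$ and $4=1$, forcing $n=3$) is also correct and a pleasant uniform way to get triangle-freeness for all $n\ne 3$ without the parity split, though like the paper you still need Lemma \ref{1.1} and the constants-as-induced-subgraph observation to pin down the stated equalities.
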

\begin{proof}
If $n$ is even, then by Theorem \ref{bip}, the graph $\G(\mathbb{Z}_n[x])$ is bipartite, implying that its clique number is equal to $2$. Moreover, if $n$ is odd, by Theorem \ref{main}, the graph $\G(\mathbb{Z}_n[x])$ is a disjoint union of isomorphic copies of $\G(\mathbb{Z}_n)$, so 
$$	\omega\b(\G(\mathbb{Z}_n[x])\b)=\omega\b(\G(\mathbb{Z}_n)\b).$$
Now, Lemma \ref{1.1} completes the proof.
\end{proof}

A graph is said to be \textit{self-complementary} if it is isomorphic to its complement. According to \cite[Theorem 2.10]{inv}, the graph $\G(\mathbb{Z}_n)$ is self-complementary if and only if $n=5$. However, $\G(\mathbb{Z}_n[x])$ is never self-complementary. This follows from Theorems \ref{alpha} and \ref{w}, which show that the clique number and the independence number of $\G(\mathbb{Z}_n[x])$ are never equal.

Finally, we determine the girth of $\G(\mathbb{Z}_n[x])$. The \textit{girth} $\gr(G)$ of a graph $G$  is defined as the length of its shortest cycle; if $G$ contains no cycles, then $\gr(G)=\infty$.

\begin{thm}
Let $n\geq 2$ be a positive integer. Then
\[   
\gr\b(\G(\mathbb{Z}_n[x])\b) =  \gr\b(\G(\mathbb{Z}_n)\b)=
\begin{cases}
	\infty &\qquad n=2,\\
	n & \qquad n=p^k \, \text{or}\, \,  2p^k\, \text{for some odd prime} \, \, p,\\ 
	4 & \qquad  \text{otherwise}. \\ 
\end{cases}
\]
\end{thm}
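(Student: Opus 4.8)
The second equality in the statement is precisely Lemma~\ref{1.1}(v), so the real task is to prove that $\gr\b(\G(\mathbb{Z}_n[x])\b)=\gr\b(\G(\mathbb{Z}_n)\b)$, which I plan to do by a short case analysis mirroring the three cases of the displayed formula. First I would record the one inequality that holds for all $n$: the constant polynomials form a subring of $\mathbb{Z}_n[x]$ isomorphic to $\mathbb{Z}_n$, and for constants $a,b$ the relation $(a-b)^2=1$ holds in $\mathbb{Z}_n[x]$ if and only if it holds in $\mathbb{Z}_n$; hence $\G(\mathbb{Z}_n)$ sits inside $\G(\mathbb{Z}_n[x])$ as an induced subgraph, so every cycle of $\G(\mathbb{Z}_n)$ is a cycle of $\G(\mathbb{Z}_n[x])$, and therefore $\gr\b(\G(\mathbb{Z}_n[x])\b)\le\gr\b(\G(\mathbb{Z}_n)\b)$.

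For the reverse direction I would distinguish four cases covering all $n\geq 2$. If $n$ is odd, Theorem~\ref{main}(i) says that $\G(\mathbb{Z}_n[x])$ is a disjoint union of copies of $\G(\mathbb{Z}_n)$, so the two girths coincide, and Lemma~\ref{1.1}(v) supplies the value. If $n=2$, then $\inv(\mathbb{Z}_2[x])=\{1\}$ by Proposition~\ref{z2^k}, so $\G(\mathbb{Z}_2[x])$ is $1$-regular, hence acyclic, and its girth is $\infty=\gr\b(\G(\mathbb{Z}_2)\b)$. If $n=2p^k$ for some odd prime $p$, then by Corollary~\ref{pk2pk} the graph $\G(\mathbb{Z}_n[x])$ is a disjoint union of cycles of length $n$, so its girth is $n=\gr\b(\G(\mathbb{Z}_n)\b)$ (the case $n=p^k$ with $p$ odd is already subsumed by the odd case). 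In the only remaining case --- $n$ even, $n\neq 2$, and $n$ not of the form $2p^k$ --- Lemma~\ref{1.1}(v) gives $\gr\b(\G(\mathbb{Z}_n)\b)=4$, so by the upper bound $\G(\mathbb{Z}_n[x])$ has a cycle of length at most $4$; on the other hand, since $n$ is even, Theorem~\ref{bip} shows $\G(\mathbb{Z}_n[x])$ is bipartite and hence triangle-free, so its girth is at least $4$. Combining the bounds gives $\gr\b(\G(\mathbb{Z}_n[x])\b)=4$, which completes this case and the theorem.

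I do not anticipate a genuine obstacle; the only step requiring a little care is the last case, where one must use the subgraph embedding to produce a $4$-cycle and bipartiteness to exclude $3$-cycles, rather than trying to exhibit a shortest cycle by hand. As an alternative to quoting Theorem~\ref{bip}, one can rule out triangles directly: a triangle $f\sim g\sim h\sim f$ forces $u^2=v^2=(u+v)^2=1$ for the involutions $u=f-g$ and $v=g-h$, and expanding the last equation yields $2uv=-1$, which is impossible whenever $2$ is a non-unit of $\mathbb{Z}_n[x]$, i.e.\ whenever $n$ is even --- but invoking Theorem~\ref{bip} is shorter.
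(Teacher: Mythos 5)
Your proof is correct and takes essentially the same route as the paper: it dispatches $n=2$, $p^k$ and $2p^k$ via Proposition \ref{z2^k} and Corollary \ref{pk2pk}, and in the remaining cases combines the subgraph bound $\gr\b(\G(\mathbb{Z}_n[x])\b)\leq \gr\b(\G(\mathbb{Z}_n)\b)=4$ (Lemma \ref{1.1}) with the absence of triangles. The only cosmetic difference is that the paper rules out triangles uniformly via the clique number (Theorem \ref{w}), whereas you split into the odd case (handled by the disjoint-union structure of Theorem \ref{main}) and the even case (handled by bipartiteness, Theorem \ref{bip}).
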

\begin{proof}
If $n=2$, $n=p^k$ or $n=2p^k$ for some odd prime $p$, then the result follows directly from Proposition \ref{z2^k} and Corollary \ref{pk2pk}. So, assume that none of these cases occur.  Since 
$$\gr\b(\G(\mathbb{Z}_n[x])\b)\leq \gr\b(\G(\mathbb{Z}_n)\b),$$
by Lemma \ref{1.1}, we infer that $\gr(\G(\mathbb{Z}_n[x]))\leq 4$. On the other hand, by Theorem \ref{w}, we know that $\G(\mathbb{Z}_n[x])$ does not contain a cycle  of length $3$. Therefore, we conclude that 
$\gr(\G(\mathbb{Z}_n[x]))= 4,$
as claimed.
\end{proof}

\begin{remark}
As in Section 2,  all the results of this section also hold when $\mathbb{Z}_n[x]$ is replaced by $\mathbb{Z}_n[[x]]$.
\end{remark}



\begin{thebibliography}{}

\bibitem{zunit2} G. Aalipour and S. Akbari, On the Cayley graph of a commutative ring with respect to its zero-divisors, \textit{Commun. Algebra}, \textbf{44} (2016), 1443--1459. 

\bibitem{zunit1} G. Aalipour and S. Akbari, Some properties of a Cayley graph of a commutative ring, \textit{Commun. Algebra}, \textbf{42} (2014), 1582--1593. 

\bibitem{akhtar} R. Akhtar, M. Boggess, T. Jackson-Henderson, I. Jiménez, R. Karpman, A. Kinzel,  and D. Pritikin, On the unitary Cayley graph of a finite ring, \textit{Electron. J. Comb.},  (2009), R117. 

\bibitem{unit1} B. Chen and J. Huang, On unitary Cayley graphs of matrix rings, \textit{Discrete Math.}, \textbf{345} (2022), 112671. 

\bibitem{qunit1} J. Huang, On the quadratic unitary Cayley graphs, \textit{Linear Algebra Appl.}, \textbf{644} (2022), 219--233. 

\bibitem{inv} H. Keshavarzi, B. Amini, A. Amini and S. Rahimi, Involutory Cayley graphs of finite commutative rings, \textit{J. Algebra Appl.}, (2025), 2650258.

\bibitem{inv2} H. Keshavarzi, B. Amini, A. Amini and S. Rahimi, Toroidal involutory Cayley graphs, \textit{Submitted}, (2025).

\bibitem{unit6}  D. Kiani and M. M. H. Aghaei, On the unitary Cayley graphs of a ring, \textit{Electron. J. Comb.}, \textbf{65} (2012), P10. 

\bibitem{unit2} D. Kiani,  M. M. H. Aghaei, Y. Meemark and B. Suntornpoch, Energy of unitary Cayley graphs and gcd-graphs, \textit{Linear Algebra Appl.}, \textbf{435} (2011), 1336--1343. 

\bibitem{qunit3} X. Liu and S. Zhou, Quadratic unitary Cayley graphs of finite commutative rings, \textit{Linear Algebra Appl.}, \textbf{479} (2015), 73-90. 

\bibitem{unit4} S. Rahimi and  A. Nikseresht, Well-covered Unit Graphs of Finite Rings, \textit{arXiv preprint}, (2024), arXiv:2404.07189.

\bibitem{unit3} S. Rahimi and  A. Nikseresht, Well-covered unitary Cayley graphs of matrix rings over finite fields and applications, \textit{Finite Fields Appl.}, \textbf{96} (2024), 102428. 

\bibitem{unit5}  J. Rattanakangwanwong and Y. Meemark, Unitary Cayley graphs of matrix rings over finite commutative rings, \textit{Finite Fields Appl.}, \textbf{65} (2020), 101689. 




\end{thebibliography}
\end{document}